\newtheorem{thm}{Theorem}[section]
\newtheorem{df}{Definition}[section]
\newtheorem{remark}{Remark}[section]
\newtheorem{exmp}{Example}[section]
\begin{document}
\begin{center}
{\Large{\bf Some characterizations on weighted $\alpha\beta$-statistical convergence of fuzzy functions of order $\theta$}}\\\vspace{.5cm}
Sarita Ojha and P. D. Srivastava\\
Department of Mathematics, Indian Institute of Technology,\\ Kharagpur 721302, India
\end{center}
\vspace{1cm}

\section*{Abstract}
Based on the concept of new type of statistical convergence defined by Aktuglu, we have introduced the weighted $\alpha\beta$ - statistical convergence of order $\theta$  in case of fuzzy functions and classified it into pointwise, uniform and equi-statistical convergence. We have checked some basic properties and then the convergence are investigated in terms of their $\alpha$-cuts. The interrelation among them are also established. We have also proved that continuity, boundedness etc are preserved in the equi-statistical sense under some suitable conditions, but not in pointwise sense.\\
{\bf Keywords:} Sequences of fuzzy numbers; Fuzzy function; Weighted statistical convergence.\\\vspace{.4cm}
{\bf AMS subject classification:} 46S40;  03E72.

\section{Introduction}
Convergence of sequences, in classical or fuzzy sense, means that almost all elements of the sequence have to belong to an arbitrary small neighborhood of the limit. The aim is to introduce the concept of statistical convergence is to relax the above condition and to examine the convergence criterion only for majority of elements. As always, statistics are concerned only about big quantities and the term "majority" is simply imply the concept "almost all" in our known classical analysis.\\
On the other hand, as the set of all real numbers can be embedded in the set of all fuzzy numbers, statistical convergence in reals can be considered as a special case of those fuzzy numbers. But the set of all fuzzy numbers is partially ordered and does not carry a group structure, so most of the results known for the sequences of real numbers may not be valid in fuzzy setting. Therefore, the theory is not a trivial extension of what has been known in real case.\\
The concept of statistical convergence was introduced by Fast \cite{F} in 1951. Several development in this direction, for reals as well as for fuzzy, can be enumerated as follows:
\begin{enumerate}[(i)]
\item $\lambda$ - statistical convergence of order $\alpha$ \cite{CB}, \cite{At}.
\item lacunary statistical convergence order $\alpha$ \cite{N}, \cite{AG}.
\item weighted statistical convergence \cite{MM}.
\end{enumerate}
The main aim of introducing $\alpha\beta$ - statistical convergence was to extend the ordinary and statistical convergence sense. This not only includes some well known matrix methods such as statistical convergence, $\lambda$ - statistical convergence, lacunary statistical convergence but also gives some non-regular matrix methods.

\section{Preliminaries}
A fuzzy real number $\hat{x}:\mathbb{R}\rightarrow [0,1]$ is a fuzzy set which is normal, fuzzy convex, upper semicontinuous and $[\hat{x}]_0=\{t\in \mathbb{R}:\hat{x}(t)>0\}$ is compact. Clearly, $\mathbb{R}$ is embedded in $L(R)$, the set of all fuzzy numbers, in this way: for each $r\in \mathbb{R}, \overline{r}\in L(R)$ is defined as,
\begin{center}
$\overline{r}(t) = \left\{
\begin{array}{c l}
  1, & t=r \\
  0, & t\neq r
\end{array}
\right.$\end{center}
{\bf Note:} Throughout this article, we shall use this type of fuzzy numbers to avoid complications in calculations.\\
For, $0<\alpha\leq1$, $\alpha$-cut of $\hat{x}$ is defined by $[\hat{x}]_{(\alpha)}=\{t\in\mathbb{R}:\hat{x}(t)\geq\alpha\}=[(\hat{x})_{\alpha} ^-,(\hat{x})_{\alpha} ^+]$ is a closed and bounded interval of $\mathbb{R}$. Now for any two fuzzy numbers $\hat{x},\hat{y}$, Matloka \cite{M} proved that $L(R)$ is complete under the metric $d$ defined as:
\begin{equation*}
d(\hat{x},\hat{y})=\sup\limits_{0\leq \alpha \leq1} \max\{|(\hat{x})_{\alpha} ^--(\hat{y})_{\alpha} ^-|,|(\hat{x})_{\alpha} ^+-(\hat{y})_{\alpha} ^+|\}
\end{equation*}
For any $\hat{x},\hat{y},\hat{z},\hat{w}\in L(R)$, this Hausdorff metric $d$ satisfies
\begin{enumerate}[(i)]
\item $d(c\hat{x},c\hat{y})=|c| d(\hat{x},\hat{y})$, $c\in\mathbb{R}$.
\item $d(\hat{x}+\hat{z},\hat{y}+\hat{z})=d(\hat{x},\hat{y})$.
\item $d(\hat{x}+\hat{z},\hat{y}+\hat{w})\leq d(\hat{x},\hat{y})+d(\hat{z},\hat{w})$.
\end{enumerate}

\begin{df}
A sequence $x=(x_k)$ of real or complex numbers is said to be statistically convergent to a
number $l$ if for every $\varepsilon>0$,
\begin{equation*}
\lim\limits_{n\rightarrow\infty} \frac{1}{n}|\{k\leq n:|x_k-l|\geq
\varepsilon\}|=0.
\end{equation*}
\end{df}
\noindent where the vertical bars indicate the number of elements in the enclosed set. This has been extended in case of fuzzy by several authors such as Nuray \cite{N}, Savas \cite{NS}, Ojha and Srivastava \cite{SO} etc from several aspects. Recently, Gong et al \cite{ZZZ} defined statistical convergence in measure for sequences of fuzzy functions and established Egorov and Lebesgue theorems in statistical sense.

\begin{df}\cite{MM}
Let $(t_n)$ be a sequence of non-negative real numbers such that $t_1>0$ and $T_n=\sum\limits_{k=1} ^n t_k\to\infty$ as $n\to\infty$. Then the sequence of real numbers $(x_n)$ is said to be weighted statistically convergent to $l$ if for every $\varepsilon>0$,
\begin{equation*}
\lim\limits_{n\rightarrow\infty} \frac{1}{T_n}|\{k\leq T_n:t_k|x_k-l|\geq
\varepsilon\}|=0.
\end{equation*}
\end{df}

\noindent Many authors have generalized the definition of statistical convergence and introduced new class of sequences for the real or complex numbers. The most recent generalization in this direction is the idea of $\alpha\beta$-statistical convergence which is introduced by Aktuglu as follows:
\begin{df} \cite{A}
Let $(\alpha_n),(\beta_n)$ be two sequence of positive number s.t.
\begin{enumerate}[(i)]
\item $(\alpha_n)$, $(\beta_n)$ are both non-decreasing.
\item $\beta_n\geq \alpha_n$.
\item $\beta_n-\alpha_n\to\infty$ as $n\to\infty$.
\end{enumerate}
Then a sequence $(x_k)$ of real numbers is said to be $\alpha\beta$ - statistically convergent of order $\gamma$ to $L$ if for every $\varepsilon>0$,
\begin{equation*}
\lim\limits_{n\to\infty} \frac{1}{(\beta_n-\alpha_n+1)^{\gamma}} \Big|\Big\{k\in[\alpha_n,\beta_n]: |x_k-L|\geq\varepsilon\Big\}\Big|\to 0.
\end{equation*}
\end{df}

\begin{remark}
This definition includes the following cases:
\begin{enumerate}[(i)]
\item Taking $\alpha_n=1,\beta_n=n$, this $\alpha\beta$-convergence coincides with the usual statistical convergence \cite{F}.
\item Let $(\lambda_n)$ is non-decreasing sequence of positive real numbers tending to $\infty$ such that $\lambda_1=1,\lambda_{n+1}\leq \lambda_n+1$ for all $n$. Then choosing $\alpha_n=n-\lambda_n+1$ and $\beta_n=n$, this convergence coincides with the concept of $\lambda$-statistical convergence of order $\theta$ \cite{CB}.
\item Choosing $\alpha_r=k_{r-1}+1,\beta_r=k_r$ where $\theta=(k_r)$ is an increasing integer sequence with $k_0=0$ and $h_r=k_r-k_{r-1}$, then this convergence coincides with lacunary statistical convergence of order $\gamma$ \cite{AG}.
\end{enumerate}
\end{remark}

\noindent Then Karakaya \cite{KK} defined weighted $\alpha\beta$ -statistical convergence and summability for real sequences. Recently Ghoshal \cite{G} has introduced weighted random convergence in probability. He has shown that the definition of weighted $\lambda$ - statistical convergence, given by \cite{BM} is not well defined, so he modified the condition on the weight sequences $(t_n)$. With this modified condition, we define the three types of convergence of fuzzy functions in weighted $\alpha\beta$-statistical sense and investigate various properties of it.

\section{$\alpha\beta$-statistical convergence of sequence of fuzzy functions of order $\theta$}

Let $t=(t_k)$ be a sequence of non-negative real numbers such that $\lim\inf _k t_k>0$ and
\begin{equation*}
T_{\alpha\beta (n)}=\sum\limits_{k\in  [\alpha_n,\beta_n]} t_k,\ n\in\mathbb{N}
\end{equation*}

\begin{df}
A sequence $\hat{f_k}:[a,b]\to L(R)$ of fuzzy functions is said to be weighted $\alpha\beta$ - pointwise statistical convergent of order $\theta$ to $\hat{f}$ if for every $\varepsilon>0$ and for each $x\in[a,b]$,
\begin{equation*}
\lim\limits_{n\to\infty} \frac{1}{T_{\alpha\beta (n)} ^{\theta}} \Big|\Big\{k\leq T_{\alpha\beta (n)}: t_kd(\hat{f_k}(x),\hat{f}(x))\geq\varepsilon\Big\}\Big|=0
\end{equation*}
\end{df}
\noindent We shall denote it as $\hat{f_k}\xrightarrow{WP_{\alpha,\beta} ^{\theta}} \hat{f}$ on $[a,b]$.

\begin{df}
A sequence $\hat{f_k}:[a,b]\to L(R)$ of fuzzy functions is said to be weighted $\alpha\beta$ - uniformly statistical convergent of order $\theta$ to $\hat{f}$ if for every $\varepsilon>0$ and for all $x\in[a,b]$,
\begin{equation*}
\lim\limits_{n\to\infty} \frac{1}{T_{\alpha\beta (n)} ^{\theta}} \Big|\Big\{k\leq T_{\alpha\beta (n)}: t_kd(\hat{f_k}(x),\hat{f}(x))\geq\varepsilon\Big\}\Big|=0
\end{equation*}
\end{df}
\noindent We shall denote it as $\hat{f_k}\xrightarrow{WU_{\alpha,\beta} ^{\theta}} \hat{f}$ on $[a,b]$.

\begin{df}
A sequence $\hat{f_k}:[a,b]\to L(R)$ of fuzzy functions is said to be weighted $\alpha\beta$ - equi-statistically convergent of order $\theta$ to $\hat{f}$ if for given $\varepsilon>0$,
\begin{equation*}
S_m (x)= \frac{1}{T_{\alpha\beta (m)} ^{\theta}} \Big|\Big\{k\leq T_{\alpha\beta (m)}: t_kd(\hat{f_k}(x),\hat{f}(x))\geq\varepsilon\Big\}\Big|
\end{equation*}
is uniformly converges to 0 with respect to $x\in [a,b]$. We shall denote it as $\hat{f_k}\xrightarrow{WE_{\alpha,\beta} ^{\theta}} \hat{f}$.
\end{df}

\begin{thm}
Let $(\hat{f_k}), (\hat{g_k})$ be two sequences of functions from $[a,b]$ to $L(R)$ such that $\hat{f_k}\xrightarrow{WP_{\alpha,\beta} ^{\theta}} \hat{f}$ and $\hat{g_k}\xrightarrow{WP_{\alpha,\beta} ^{\theta}} \hat{g}$ on $[a,b]$. Then,
\begin{enumerate}[(i)]
\item $\hat{f_k}+\hat{g_k} \xrightarrow{WP_{\alpha,\beta} ^{\theta}} \hat{f}+\hat{g}$.
\item $c\hat{f_k} \xrightarrow{WP_{\alpha,\beta} ^{\theta}} c\hat{f}$ for any non-zero $c\in\mathbb{R}$.
\end{enumerate}
\end{thm}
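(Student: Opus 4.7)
The plan is to reduce both statements to the analogous facts about the underlying (real-valued) density by exploiting the two relevant properties of the Hausdorff metric $d$ on $L(\mathbb{R})$ stated in the preliminaries, namely the triangle inequality $d(\hat{x}+\hat{z},\hat{y}+\hat{w})\leq d(\hat{x},\hat{y})+d(\hat{z},\hat{w})$ and the absolute homogeneity $d(c\hat{x},c\hat{y})=|c|\,d(\hat{x},\hat{y})$. Throughout, I would fix an arbitrary $x\in[a,b]$ (since the convergence in Definition 3.1 is pointwise) and work at that point, so that the fuzzy-function framework collapses to working with the numerical sequences $d(\hat{f}_k(x),\hat{f}(x))$ and $d(\hat{g}_k(x),\hat{g}(x))$.

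For part (i), I would fix $\varepsilon>0$ and observe via property (iii) that
\begin{equation*}
t_k\,d\bigl(\hat{f}_k(x)+\hat{g}_k(x),\,\hat{f}(x)+\hat{g}(x)\bigr)\;\leq\;t_k\,d(\hat{f}_k(x),\hat{f}(x))+t_k\,d(\hat{g}_k(x),\hat{g}(x)).
\end{equation*}
Hence if the left-hand side is $\geq\varepsilon$, at least one of the two summands on the right is $\geq\varepsilon/2$. Writing $A_n$, $B_n$, $C_n$ for the index sets in $\{k\leq T_{\alpha\beta(n)}\}$ corresponding to the sum, to $\hat{f}_k$, and to $\hat{g}_k$ (at tolerances $\varepsilon$, $\varepsilon/2$, $\varepsilon/2$ respectively), this gives $A_n\subseteq B_n\cup C_n$ and therefore $|A_n|\leq|B_n|+|C_n|$. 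Dividing by $T_{\alpha\beta(n)}^{\theta}$ and letting $n\to\infty$, each term on the right tends to $0$ by hypothesis, which yields the claim.

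For part (ii), fix $\varepsilon>0$ and a non-zero $c\in\mathbb{R}$. By property (i) of the metric, $t_k\,d(c\hat{f}_k(x),c\hat{f}(x))=|c|\,t_k\,d(\hat{f}_k(x),\hat{f}(x))$, so the two events $\{t_k\,d(c\hat{f}_k(x),c\hat{f}(x))\geq\varepsilon\}$ and $\{t_k\,d(\hat{f}_k(x),\hat{f}(x))\geq\varepsilon/|c|\}$ coincide. The corresponding index set therefore has exactly the same cardinality as one whose $T_{\alpha\beta(n)}^{\theta}$-density vanishes by hypothesis (applied with the positive tolerance $\varepsilon/|c|$), and the conclusion follows.

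I do not anticipate a genuine obstacle: both parts are direct consequences of the metric properties and the elementary fact that finite unions of sets of density zero have density zero. The only point requiring mild care is to keep the truncation at $k\leq T_{\alpha\beta(n)}$ consistent across the three index sets in part (i), and to make sure the hypothesis $c\neq 0$ in part (ii) is used to justify dividing $\varepsilon$ by $|c|$; the normalising factor $T_{\alpha\beta(n)}^{\theta}$ plays no active role beyond being the common denominator.
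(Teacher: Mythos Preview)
Your proposal is correct and follows essentially the same approach as the paper: for (i) the paper uses the triangle inequality for $d$, the implication that $t_k d(\hat f_k,\hat f)+t_k d(\hat g_k,\hat g)\geq\varepsilon$ forces one summand to be at least $\varepsilon/2$, and the resulting set inclusion, while for (ii) it uses the identity $d(c\hat f_k,c\hat f)=|c|\,d(\hat f_k,\hat f)$ to rewrite the index set with tolerance $\varepsilon/|c|$. Your write-up is slightly more explicit (naming the sets $A_n,B_n,C_n$ and flagging the role of $c\neq 0$), but the argument is the same.
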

\begin{proof}
\begin{enumerate}[(i)]
\item Since
\begin{eqnarray*}
d(\hat{f_k}(x)+\hat{g_k}(x),\hat{f}(x)+\hat{g}(x)) \leq d(\hat{f_k}(x),\hat{f}(x))+d(\hat{g_k}(x),\hat{g}(x))\ \mbox{for each}\  x\in[a,b]
\end{eqnarray*}
Also since $(t_k)$ are positive real numbers, so
\begin{eqnarray*}
t_k[d(\hat{f_k}(x),\hat{f}(x))+d(\hat{g_k}(x),\hat{g}(x))]\geq \varepsilon \implies t_kd(\hat{f_k}(x),\hat{f}(x))\geq\frac{\varepsilon}{2}\ \mbox{or} \ t_kd(\hat{g_k}(x),\hat{g}(x))\geq\frac{\varepsilon}{2}
\end{eqnarray*}
So for each $x\in[a,b]$,
\begin{eqnarray*}
\Big\{k\leq T_{\alpha\beta (n)}: t_kd(\hat{f_k}(x)+\hat{g_k}(x),\hat{f}(x)+\hat{g}(x))\geq\varepsilon\Big\} &\subseteq&  \Big\{k\leq T_{\alpha\beta (n)}: t_kd(\hat{f_k}(x),\hat{f}(x))\geq\frac{\varepsilon}{2}\Big\}\\ && \cup \ \Big\{k\leq T_{\alpha\beta (n)}: t_kd(\hat{g_k}(x),\hat{g}(x))\geq\frac{\varepsilon}{2}\Big\}
\end{eqnarray*}
Taking cardinality in both sides, we get $\hat{f_k}+\hat{g_k} \xrightarrow{WP_{\alpha,\beta} ^{\theta}} \hat{f}+\hat{g}$.
\item As for any non-zero $c\in\mathbb{R}$ and $x\in[a,b]$, $d(c\hat{f_k}(x),c\hat{f}(x))=|c|d(\hat{f_k}(x),\hat{f}(x))$, thus for each $x\in[a,b]$,
\begin{eqnarray*}
\Big\{k\leq T_{\alpha\beta (n)}: t_kd(c\hat{f_k}(x),c\hat{f}(x))\geq\varepsilon\Big\}  =  \Big\{k\leq T_{\alpha\beta (n)}: t_kd(\hat{f_k}(x),\hat{f}(x))\geq\frac{\varepsilon}{|c|}\Big\}
\end{eqnarray*}
So $c\hat{f_k} \xrightarrow{WP_{\alpha,\beta} ^{\theta}} c\hat{f}$ which completes the proof.
\end{enumerate}
\end{proof}

\begin{thm}
Let $(\hat{f_k})$ be a sequences of functions from $[a,b]$ to $L(R)$ such that $\hat{f_k}\xrightarrow{WP_{\alpha,\beta} ^{\theta}} \hat{f}$ on $[a,b]$ and $[c,d]\subset [a,b]$, then $\hat{f_k}\xrightarrow{WP_{\alpha,\beta} ^{\theta}} \hat{f}$ on $[c,d]$.
\end{thm}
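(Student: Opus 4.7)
The plan is to observe that this theorem follows immediately from the definition of weighted $\alpha\beta$-pointwise statistical convergence of order $\theta$, because the defining condition is a pointwise one. First I would unpack what $\hat{f_k}\xrightarrow{WP_{\alpha,\beta}^{\theta}} \hat{f}$ on $[a,b]$ actually asserts: for every fixed $x \in [a,b]$ and every $\varepsilon > 0$,
\begin{equation*}
\lim_{n\to\infty} \frac{1}{T_{\alpha\beta(n)}^{\theta}} \Big|\Big\{k \leq T_{\alpha\beta(n)}: t_k d(\hat{f_k}(x),\hat{f}(x)) \geq \varepsilon\Big\}\Big| = 0.
\end{equation*}

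Next, I would note that the hypothesis $[c,d] \subset [a,b]$ implies that every $x \in [c,d]$ is also a point of $[a,b]$, so the displayed limit equation holds at each such $x$ automatically. Thus, reading the same definition restricted to $[c,d]$, the convergence $\hat{f_k}\xrightarrow{WP_{\alpha,\beta}^{\theta}} \hat{f}$ holds on $[c,d]$.

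There is really no obstacle here—the result is a trivial consequence of the fact that pointwise convergence on a set passes to any subset, and the weighted $\alpha\beta$-statistical version does not alter this because the statistical condition is imposed separately at each point. I would remark that the analogous statement for the uniform convergence $WU_{\alpha,\beta}^{\theta}$ and the equi-statistical convergence $WE_{\alpha,\beta}^{\theta}$ follows by the same reasoning, since the supremum (respectively, the uniform-in-$x$ convergence of $S_m(x)$) over $[c,d]$ is bounded above by the corresponding quantity over $[a,b]$.
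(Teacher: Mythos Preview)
Your proposal is correct, and indeed the paper gives no proof at all for this theorem, simply stating it and moving on to the next result. Your argument---that the pointwise condition at each $x\in[a,b]$ automatically holds at each $x\in[c,d]\subset[a,b]$---is exactly the trivial observation the authors evidently had in mind, and your additional remark about the $WU_{\alpha,\beta}^{\theta}$ and $WE_{\alpha,\beta}^{\theta}$ cases matches the paper's Remark that Theorems 2.1--2.3 hold for those definitions as well.
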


\begin{thm}
Let $0<\theta\leq \gamma\leq1$. Then on $[a,b]$
\begin{equation*}
\hat{f_k}\xrightarrow{WP_{\alpha,\beta} ^{\theta}} \hat{f} \implies \hat{f_k}\xrightarrow{WP_{\alpha,\beta} ^{\gamma}} \hat{f}.
\end{equation*}
The inclusion is strict in sense. This holds for Definition 2.2 and 2.3 also.
\end{thm}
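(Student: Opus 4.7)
The plan is to reduce the implication to a trivial comparison of powers of $T_{\alpha\beta(n)}$. The standing hypothesis $\liminf_k t_k>0$ together with $\beta_n-\alpha_n\to\infty$ forces $T_{\alpha\beta(n)}\to\infty$, so there is some $n_0$ with $T_{\alpha\beta(n)}\geq 1$ for all $n\geq n_0$. For such $n$, because $0<\theta\leq\gamma\leq 1$, we have $T_{\alpha\beta(n)}^{\theta}\leq T_{\alpha\beta(n)}^{\gamma}$, hence $1/T_{\alpha\beta(n)}^{\gamma}\leq 1/T_{\alpha\beta(n)}^{\theta}$. Multiplying through by the cardinality
\[
\Big|\Big\{k\leq T_{\alpha\beta(n)}: t_k d(\hat{f_k}(x),\hat{f}(x))\geq\varepsilon\Big\}\Big|,
\]
which is identical on both sides and independent of the order, gives the order-$\gamma$ density bounded by the order-$\theta$ density for each $\varepsilon>0$ and each $x\in[a,b]$. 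Letting $n\to\infty$ and invoking the hypothesis finishes the pointwise statement. Since the comparison of densities involves $n$ only, the mode of convergence in $x$ is preserved: this yields Definitions 2.2 (uniform) and 2.3 (equi-statistical) immediately, with no modification to the argument.

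For strictness, my plan is to transport a classical counterexample from the real $\alpha\beta$-statistical setting into the fuzzy framework via the embedding $r\mapsto\overline{r}$. Specifically, one plants approximately $\lfloor T_{\alpha\beta(n)}^{\tau}\rfloor$ \emph{bad} indices $k\leq T_{\alpha\beta(n)}$ at which $t_k d(\hat{f_k}(x),\hat{f}(x))\geq\varepsilon$, for some intermediate exponent $\theta<\tau<\gamma$; dividing by $T_{\alpha\beta(n)}^{\gamma}$ then yields a vanishing density, whereas dividing by $T_{\alpha\beta(n)}^{\theta}$ blows up, exhibiting a sequence that converges in the order-$\gamma$ sense but not in the order-$\theta$ sense.

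The hard part is not the forward implication, which is a one-line monotonicity argument, but the construction of the strictness witness: one must verify that the planted indices really fall in the counted range $\{k\leq T_{\alpha\beta(n)}\}$ for every sufficiently large $n$ (and are compatible with the window structure of $[\alpha_n,\beta_n]$), and that the weight sequence $(t_k)$ can be chosen with $\liminf t_k>0$ so that $T_{\alpha\beta(n)}$ has exactly the growth rate needed to separate the two orders cleanly.
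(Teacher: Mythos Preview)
Your forward implication is exactly the monotonicity argument the paper omits as ``simple''. Your strictness template is precisely what the paper realizes in Example~3.1 by taking $t_k\equiv 1$ (so $T_{\alpha\beta(n)}=\beta_n-\alpha_n+1$) and planting bad indices at the perfect squares, i.e.\ your intermediate exponent is $\tau=\tfrac12$; the paper's concrete example therefore only separates orders with $\theta<\tfrac12<\gamma$, whereas your outline covers any $0<\theta<\gamma\leq 1$.
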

\begin{proof}
The proof is simple, so we omit it. To prove the converse, let us consider a counter example.
\end{proof}
\begin{exmp}
Consider the interval $[\alpha_n,\beta_n]$. Define for $x\in[a,b]$
\begin{center}
$f_k(x) = \left\{
\begin{array}{c l}
  \bar{1}, & k=n^2 \ \mbox{for some} \ n\in\mathbb{N}\\
  \bar{0}, & \mbox{otherwise}.
\end{array}
\right.$
\end{center}
Then taking $t_k=1$ for all $k$, we get for each $x\in[a,b]$
\begin{eqnarray*}
&& \frac{1}{(\beta_n-\alpha_n+1)^{\gamma}}\Big|\Big\{k\leq (\beta_n-\alpha_n+1): d(\hat{f_k}(x),\bar{0})\geq\varepsilon\Big\}\Big|\\
&& = \frac{[(\beta_n-\alpha_n+1)^{1/2}]}{(\beta_n-\alpha_n+1)^{\gamma}}
\end{eqnarray*}
Now \begin{eqnarray*}
\frac{(\beta_n-\alpha_n+1)^{1/2}-1}{(\beta_n-\alpha_n+1)^{\gamma}}\leq\frac{[(\beta_n-\alpha_n+1)^{1/2}]}{(\beta_n-\alpha_n+1)^{\gamma}}\leq \frac{1}{(\beta_n-\alpha_n+1)^{\gamma-1/2}}
\end{eqnarray*}
which tends to 0 as $n\to \infty$ when $\gamma>1/2$ and diverges for $\gamma<1/2$. So by choosing $\theta<1/2<\gamma$, it is clear that $\hat{f_k}\xrightarrow{WP_{\alpha,\beta} ^{\gamma}} \hat{f}$ but $\hat{f_k}$ is not weighted $\alpha\beta$ - pointwise statistical convergent to $\bar{0}$ of order $\theta$.
\end{exmp}

\begin{remark}
The above three Theorems (2.1-2.3) is true for Definition 2.2 and Definition 2.3 also.
\end{remark}

\begin{thm}
If $\hat{f_k}\xrightarrow{WU_{\alpha,\beta} ^{\theta}} \hat{f}$ on $[a,b]$ $\iff$ $\sup\limits_{x\in[a,b]} t_kd(\hat{f_k}(x),\hat{f}(x))\xrightarrow{WP_{\alpha,\beta} ^{\theta}} 0$.
\end{thm}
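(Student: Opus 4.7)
The strategy is to derive both implications from a single elementary set inclusion between the level sets appearing in Definitions 3.1 and 3.2. Write $M_k(x):=t_k\,d(\hat f_k(x),\hat f(x))$ and $\bar M_k:=\sup_{x\in[a,b]} M_k(x)$. The pointwise bound $M_k(x)\le \bar M_k$ gives, for every $\varepsilon>0$ and every $x\in[a,b]$, the inclusion
\begin{equation*}
\bigl\{k\le T_{\alpha\beta(n)}:M_k(x)\ge\varepsilon\bigr\}\;\subseteq\;\bigl\{k\le T_{\alpha\beta(n)}:\bar M_k\ge\varepsilon\bigr\}.
\end{equation*}
Dividing cardinalities by $T_{\alpha\beta(n)}^{\theta}$ produces an $x$-independent upper bound on the density appearing in Definition 3.2 by the (single) density of the sequence $\bar M_k$, and both directions of the equivalence follow by monotone estimates on this inclusion.

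For the $(\Leftarrow)$ direction, the hypothesis $\bar M_k \xrightarrow{WP_{\alpha,\beta}^\theta} 0$ says the density of the right-hand set tends to $0$ as $n\to\infty$. By the inclusion, so does the density of the left-hand set, and since the dominating bound is free of $x$, this vanishing is uniform in $x\in[a,b]$, which is precisely the content of Definition 3.2. For the $(\Rightarrow)$ direction, I would use the $x$-uniformity built into Definition 3.2: for every $\varepsilon,\eta>0$ there is an $N$ independent of $x$ with $|\{k\le T_{\alpha\beta(n)}:M_k(x)\ge\varepsilon\}|/T_{\alpha\beta(n)}^{\theta}<\eta$ for all $n\ge N$ and all $x\in[a,b]$; then, for each $k$ with $\bar M_k\ge\varepsilon$, select a witness $x_k\in[a,b]$ with $M_k(x_k)\ge\varepsilon$ (or $\ge\varepsilon-1/m$), and absorb the resulting per-$k$ bound into a single density estimate using the $x$-independence of $N$ and $\eta$.

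The main obstacle will be this $(\Rightarrow)$ direction: the witness $x_k$ depends on $k$, so one cannot simply plug a fixed $x$ into the uniform hypothesis. The key to overcoming it is that the bound in Definition 3.2 is supplied by a single $N$ valid for \emph{all} $x$, so each $k$ in the target set is controlled by a bound that does not know which $x_k$ was chosen. If a cleaner handle is needed, I would introduce a finite $\delta$-net of $[a,b]$, reducing the supremum in $\bar M_k$ to a maximum over finitely many points, and apply subadditivity of cardinalities together with the uniform estimate on the finite net. The forward implication is then essentially a clean application of the set inclusion, making the overall argument short.
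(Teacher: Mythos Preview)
Your $(\Leftarrow)$ argument via the inclusion $\{k:M_k(x)\ge\varepsilon\}\subseteq\{k:\bar M_k\ge\varepsilon\}$ matches the paper's. The gap is in $(\Rightarrow)$, and it stems from your reading of Definition~3.2. You unpack WU as ``for every $\varepsilon,\eta>0$ there is a single $N$, independent of $x$, with $T_{\alpha\beta(n)}^{-\theta}\bigl|\{k\le T_{\alpha\beta(n)}:M_k(x)\ge\varepsilon\}\bigr|<\eta$ for all $n\ge N$ and all $x$''; but that is exactly Definition~3.3 (equi-statistical convergence), not the uniform notion. The paper's own proof makes clear that in Definition~3.2 the quantifier ``for all $x$'' is meant to sit \emph{inside} the set-builder: WU is a density condition on the single set $B_n=\{k\le T_{\alpha\beta(n)}:t_kd(\hat f_k(x),\hat f(x))\ge\varepsilon\ \text{for all }x\in[a,b]\}$. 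With that reading the converse is the one-line complement argument the paper gives --- compare $A_n=\{k:\bar M_k\ge\varepsilon\}$ with $B_n$ directly and conclude $A_n\subseteq B_n$ --- and no witness points $x_k$ or $\delta$-nets are involved.

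Under your reading the $(\Rightarrow)$ implication is in fact false, so neither of your proposed devices can be made to work. Example~3.3 of the paper already furnishes the counterexample: with $t_k\equiv1$, $\alpha_n=1$, $\beta_n=n$, $\theta=1$, the sequence $\hat f_n(x)=\overline{nx/(1+n^2x^2)}$ on $[0,1]$ is shown there to be equi-statistically convergent to $\bar 0$, while $\sup_{x\in[0,1]} d(\hat f_n(x),\bar 0)=\tfrac12$ for every $n$, so $\bar M_k$ does not tend to $0$ in the $WP_{\alpha,\beta}^{\theta}$ sense. In particular, the $\delta$-net reduction fails because replacing $\sup_x M_k(x)$ by a maximum over finitely many net points requires (uniform) equicontinuity of the family $\{x\mapsto M_k(x)\}_k$, which is nowhere assumed.
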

\begin{proof}
Let $\sup\limits_{x\in[a,b]} t_kd(\hat{f_k}(x),\hat{f}(x))\xrightarrow{WP_{\alpha,\beta} ^{\theta}} 0$. Now for every $\varepsilon>0$,
\begin{eqnarray*}
&& \sup\limits_{x\in[a,b]} t_kd(\hat{f_k}(x),\hat{f}(x))\geq t_kd(\hat{f_k}(x),\hat{f}(x))\ \mbox{for all}\ x\in[a,b]\\
\mbox{i.e.}&& \ \Big\{k\leq T_{\alpha\beta (n)}: t_kd(\hat{f_k}(x),\hat{f}(x))\geq\varepsilon \ \mbox{for all}\ x\in[a,b]\Big\}\subseteq \Big\{k\leq T_{\alpha\beta (n)}: \sup\limits_{x\in[a,b]} t_kd(\hat{f_k}(x),\hat{f}(x))\geq\varepsilon\Big\}
\end{eqnarray*}
So, $\hat{f_k}\xrightarrow{WU_{\alpha,\beta} ^{\theta}} \hat{f}$ on $[a,b]$.\\
Conversely, let $\hat{f_k}\xrightarrow{WU_{\alpha,\beta} ^{\theta}} \hat{f}$ on $[a,b]$. For any $\varepsilon>0$, consider the sets,\\
\begin{eqnarray*}
A_n &=& \Big\{k\leq T_{\alpha\beta (n)}: \sup\limits_{x\in[a,b]} t_kd(\hat{f_k}(x),\hat{f}(x))\geq\varepsilon\Big\}\\
B_n &=& \Big\{k\leq T_{\alpha\beta (n)}: t_kd(\hat{f_k}(x),\hat{f}(x))\geq\varepsilon \ \mbox{for all}\ x\in[a,b]\Big\}
\end{eqnarray*}
\begin{eqnarray*}
\mbox{Let}\ k\in B_n ^c &\implies& t_kd(\hat{f_k}(x),\hat{f}(x))<\varepsilon\ \forall \ x\in[a,b]\\
&\implies& \sup\limits_{x\in[a,b]} t_kd(\hat{f_k}(x),\hat{f}(x))<\varepsilon\ \mbox{as $\varepsilon$ is arbitrary}\\
&\implies& k\in A_n ^c\\
B_n ^c \subseteq A_n ^c &\implies& A_n\subseteq B_n\\
&\implies& \frac{1}{T ^{\theta} _{\alpha\beta (n) }} |A_n|\leq \frac{1}{T ^{\theta} _{\alpha\beta (n) }} |B_n|\to 0\ \mbox{as}\ n\to\infty
\end{eqnarray*}
$\therefore \ \sup\limits_{x\in[a,b]} t_kd(\hat{f_k}(x),\hat{f}(x))\xrightarrow{WP_{\alpha,\beta} ^{\theta}} 0$.
\end{proof}

\begin{thm}
It is obvious that $\hat{f_k}\xrightarrow{WU_{\alpha,\beta} ^{\theta}} \hat{f}\ \Rightarrow \ \hat{f_k}\xrightarrow{WE_{\alpha,\beta} ^{\theta}} \hat{f} \ \Rightarrow \ \hat{f_k}\xrightarrow{WP_{\alpha,\beta} ^{\theta}} \hat{f}$ on $[a,b]$ as $k\to\infty$, for each $0<\theta\leq 1$. The inverse implications does not hold in general.
\end{thm}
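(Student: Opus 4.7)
The plan is to dispatch the two forward implications by direct inclusion of index sets and then exhibit explicit counterexamples for the two converses. The forward steps are routine once Theorem 3.4 is in hand; the substance of the theorem lies in the counterexamples.

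For WU $\Rightarrow$ WE, I would invoke Theorem 3.4 to rewrite the hypothesis as $\sup_{x\in[a,b]} t_k d(\hat{f_k}(x),\hat{f}(x))\xrightarrow{WP_{\alpha,\beta}^{\theta}} 0$. Fix $\varepsilon>0$ and set
\[
A_n=\Big\{k\leq T_{\alpha\beta(n)}:\ \sup_{x\in[a,b]} t_k\, d(\hat{f_k}(x),\hat{f}(x))\geq\varepsilon\Big\},
\]
so that $T_{\alpha\beta(n)}^{-\theta}|A_n|\to 0$. For every $x\in[a,b]$ the $x$-dependent set in the definition of $S_n(x)$ lies inside $A_n$, giving $S_n(x)\leq T_{\alpha\beta(n)}^{-\theta}|A_n|$ with a bound independent of $x$, which is precisely WE. Then WE $\Rightarrow$ WP is immediate, since uniform convergence of $S_n(x)$ on $[a,b]$ forces pointwise convergence at each $x$.

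For the converses I would give two explicit counterexamples on $[a,b]=[0,1]$ with $t_k\equiv 1$ (so $\liminf_k t_k>0$ is satisfied). To show WE $\not\Rightarrow$ WU, take $\hat{f_k}(x)=\bar{1}$ if $x=1/(k+1)$ and $\hat{f_k}(x)=\bar{0}$ otherwise. The points $1/(k+1)$ are distinct, so each fixed $x$ coincides with at most one of them; hence $S_m(x)\leq 1/T_{\alpha\beta(m)}^{\theta}$ uniformly in $x$, and WE convergence to $\bar{0}$ follows. However $\sup_{x\in[0,1]} d(\hat{f_k}(x),\bar{0})=1$ for every $k$, so the real sequence $\sup_x t_k d(\hat{f_k}(x),\bar{0})$ is identically $1$ and cannot WP-converge to $0$; by Theorem 3.4, WU fails. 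To show WP $\not\Rightarrow$ WE, take $\hat{f_k}(x)=\bar{1}$ when $0<x\leq 1/k$ and $\hat{f_k}(x)=\bar{0}$ otherwise (with $\hat{f_k}(0)=\bar{0}$). For each fixed $x>0$ only the indices $k\leq 1/x$ contribute, so $S_m(x)\to 0$ pointwise and WP holds; but choosing $x_m=1/T_{\alpha\beta(m)}$ gives $S_m(x_m)=T_{\alpha\beta(m)}^{\,1-\theta}$, which does not tend to $0$ for any $0<\theta\leq 1$, so WE fails.

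The main obstacle is the book-keeping in the counterexamples: one must confirm that $T_{\alpha\beta(m)}\to\infty$ (a consequence of $\beta_n-\alpha_n\to\infty$ together with $\liminf_k t_k>0$), so that the critical points $x_m=1/T_{\alpha\beta(m)}$ genuinely lie in $(0,1]$ and the blow-up estimate $T_{\alpha\beta(m)}^{\,1-\theta}\not\to 0$ is valid. Once these two examples are verified, the theorem is complete.
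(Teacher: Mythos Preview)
Your proposal is correct. The forward implications are handled the same way the paper handles them (the paper simply declares them ``obvious'' and moves on; your use of Theorem~3.4 makes this explicit). The real content is in the two counterexamples, and here your route genuinely differs from the paper's.

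For WP $\not\Rightarrow$ WE the paper takes $\hat f_n(x)=\overline{e^{-nx}}$ on $[0,1]$ with the specific choice $\alpha_n=n$, $\beta_n=2n-1$, $t_k\equiv 1$, $\theta=1$, and shows that on the shrinking intervals $[0,\tfrac{1}{2m-1}]$ one has $S_m(x)\to 1$. For WE $\not\Rightarrow$ WU the paper uses the bump functions $\hat f_n(x)=\overline{nx/(1+n^2x^2)}$ supported on $[\tfrac{1}{n+2},\tfrac{1}{n+1}]$, again with $\alpha_n=1$, $\beta_n=n$, $\theta=1$, obtaining $\sup_x d(\hat f_n(x),\bar 0)=\tfrac12$ for all $n$. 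Your indicator-type examples accomplish the same goals with less computation and, more importantly, work uniformly for \emph{every} admissible pair $(\alpha_n),(\beta_n)$ and every $0<\theta\le 1$; the paper's examples are pinned to particular choices and to $\theta=1$. The trade-off is that the paper's functions are continuous (indeed smooth), so they show the failures persist even within nice function classes, whereas your $\hat f_k$ are discontinuous. Either set of examples suffices for the theorem as stated.
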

\begin{proof}
To show this, we shall give the following examples. Let $t_n=1$ for all $n$ and $\theta=1$.
\begin{exmp}
Let the sequence of fuzzy functions is defined by $\hat{f} _n(x)=\overline{(e^{-nx})}$, $x\in [0,1]$. Then $d(\hat{f} _n(x),\bar{0})=e^{-nx}$ for each $n\in\mathbb{N}$. Consider $\alpha_n=n,\beta_n=2n-1$. Then $T_{\alpha\beta(n)}=n$.\\
So, $\hat{f}_n \to \bar{0}$ and therefore $\hat{f}_n \xrightarrow{st} \bar{0}$.\\
Now for each $m\in\mathbb{N}$, consider the interval $k\in[m,2m-1]$. Then for all $x\in [0,\frac{1}{2m-1}]$,
\begin{eqnarray*}
d(\hat{f} _k(x),\bar{0}) &=& e^{-kx}\geq e^{-(2m-1)x} \ \mbox{(since $e^{-kx}$ is monotonically decreasing for $k$)}\\
&\geq& \frac{1}{e}\ \mbox{as $x\in[0,\frac{1}{2m-1}]$}\\
&\geq& \frac{1}{3}.
\end{eqnarray*}
So for all $x\in [0,\frac{1}{2m-1}]$,
\begin{eqnarray*}
S_m (x)=\frac{1}{m}\Big|\Big\{k\in[m,2m-1]: d(\hat{f}_k(x),\bar{0})\geq \frac{1}{3}\Big\}\Big|\to1(\neq 0)
\end{eqnarray*}
So, $(\hat{f_k})$ does not weighted equi-statistically $\alpha\beta$-statistical convergent to $\bar{0}$.
\end{exmp}
\begin{exmp}
Let $\alpha_n=1, \beta_n=n$ for each $n$. For any $x\in [0,1]$, define\\
$f_n(x) = \left\{
\begin{array}{c l}
  \overline{\Big(\frac{nx}{1+n^2x^2}\Big)}, & x\in[\frac{1}{n+2},\frac{1}{{n+1}}] \\
  \bar{0}, & \mbox{otherwise}.
\end{array}
\right.$\\
Then for every $x\in[0,1]$, $|\{n\in \mathbb{N};\hat{f}_n(x)\neq\bar{0}\}|=0$. Thus for any given $\varepsilon>0$,
\begin{eqnarray*}
\frac{1}{m} |\{k\leq m: d(\hat{f_k}(x),\bar{0})\geq\varepsilon\}|\leq \frac{|\{n\in \mathbb{N};\hat{f}_n(x)\neq\bar{0}\}\cap\{1,2,\ldots,m\}|}{m}\leq \frac{1}{m}\to 0
\end{eqnarray*}
Hence $\hat{f_k}\xrightarrow{eq} \bar{0}$. But for all $n$,
\begin{eqnarray*}
\sup\limits_{x\in[0,1]} d(\hat{f}_n(x),\bar{0})=\sup\limits_{x\in[0,1]} \frac{nx}{1+n^2x^2} =\sup\limits_{x\in[0,1]} \frac{1}{\frac{1}{nx}+nx}=\frac{1}{2}.
\end{eqnarray*}
So $(\hat{f}_k)$ is not uniformly statistical convergent to $\bar{0}$.
\end{exmp}
\end{proof}

\begin{thm}
Let $(\hat{f_k})$ be a sequence of fuzzy functions. Then the followings hold:
\begin{enumerate}[(i)]
\item $\hat{f_k}\xrightarrow{WP_{\alpha,\beta} ^{\theta}} \hat{f}$ on $[a,b]$ iff $[\hat{f_k}]_{\alpha}$ is weighted $\alpha\beta$ - uniformly statistically convergent to $[\hat{f}]_{\alpha}$ w.r.t. $\alpha$.
\item $\hat{f_k}\xrightarrow{WU_{\alpha,\beta} ^{\theta}} \hat{f}$ on $[a,b]$ iff $[\hat{f_k}(x)]_{\alpha}$ is weighted $\alpha\beta$ - uniformly statistically convergent to $[\hat{f}(x)]_{\alpha}$ w.r.t. $\alpha$ and $x$.
\item $\hat{f_k}\xrightarrow{WE_{\alpha,\beta} ^{\theta}} \hat{f}$ on $[a,b]$ iff $[\hat{f_k}(x)]_{\alpha}$ is weighted $\alpha\beta$ - uniformly equi-statistically convergent to $[\hat{f}(x)]_{\alpha}$ for any $\alpha$ and for any $x\in[a,b]$.
\end{enumerate}
\end{thm}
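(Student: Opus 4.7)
The strategy is to translate every fuzzy-valued convergence into a statement purely about $\alpha$-cut endpoints, exploiting the defining identity of the Hausdorff metric on $L(R)$,
\[
d(\hat u,\hat v)=\sup_{\alpha\in[0,1]}d_\alpha(\hat u,\hat v),\qquad
d_\alpha(\hat u,\hat v):=\max\bigl\{|(\hat u)^-_\alpha-(\hat v)^-_\alpha|,\;|(\hat u)^+_\alpha-(\hat v)^+_\alpha|\bigr\}.
\]
Once this identity is invoked, each of (i)--(iii) reduces to a version of Theorem~2.4 (uniform$\,=\,$pointwise-of-the-sup), played either with $\alpha$ in place of $x$, or with $\alpha$ added alongside $x$. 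In what follows I write $g_{k,\alpha}(x):=t_kd_\alpha(\hat{f_k}(x),\hat f(x))$, so that $t_kd(\hat{f_k}(x),\hat f(x))=\sup_\alpha g_{k,\alpha}(x)$.

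For (i), fix $x$. The set $\{k\leq T_{\alpha\beta(n)}:t_kd(\hat{f_k}(x),\hat f(x))\geq\varepsilon\}$ coincides with $\{k:\sup_\alpha g_{k,\alpha}(x)\geq\varepsilon\}$, whose density tends to $0$ iff, by the $\alpha$-indexed analogue of Theorem~2.4, $g_{k,\alpha}(x)$ is weighted $\alpha\beta$-statistically convergent to $0$ uniformly in $\alpha$, i.e.\ iff $[\hat{f_k}(x)]_\alpha$ converges in the stated sense. For (ii), one iterates the same reduction with an outer $\sup_x$: since $\sup_{x,\alpha}g_{k,\alpha}(x)=\sup_x t_kd(\hat{f_k}(x),\hat f(x))$, Theorem~2.4 in $x$ together with the same Theorem~2.4-type passage in $\alpha$ gives the equivalence.

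For (iii), write $S_m(x)$ as in Definition~2.3 and $S_m^\alpha(x):=\frac{1}{T^\theta_{\alpha\beta(m)}}|\{k\leq T_{\alpha\beta(m)}:g_{k,\alpha}(x)\geq\varepsilon\}|$. The forward direction uses $d_\alpha\leq d$, which gives $S_m^\alpha(x)\leq S_m(x)$ for every $\alpha$, so uniform-in-$x$ convergence of $S_m$ immediately yields uniform-in-$(\alpha,x)$ convergence of $S_m^\alpha$. For the converse, I would use the identity
$\{k:t_kd(\hat{f_k}(x),\hat f(x))\geq\varepsilon\}=\{k:\sup_\alpha g_{k,\alpha}(x)\geq\varepsilon\}$
and apply Theorem~2.4 at each fixed $x$ with $\alpha$ as the uniformity variable; because the hypothesis already provides an $x$-independent estimate $\sup_\alpha S_m^\alpha(x)\leq\delta_m$ with $\delta_m\to0$, one recovers $\sup_x S_m(x)\leq\delta_m$, which is WE.

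The chief delicacy is part (iii): the interchange of the two uniformity variables, $\alpha$ coming from the Hausdorff supremum and $x$ coming from equi-statistical convergence, must be handled carefully, and in general the supremum in $\alpha$ in $d=\sup_\alpha d_\alpha$ need not be attained at a single value, so one may be forced to pass to a slightly smaller threshold $\varepsilon-\eta$ when translating set inclusions. However, the ``Note'' in Section~2 restricts attention to fuzzy numbers of the form $\overline{r}$, for which $[\overline{r}]_\alpha=\{r\}$ for every $\alpha\in(0,1]$ and consequently $d_\alpha=d$ identically in $\alpha$; in this setting $S_m^\alpha\equiv S_m$, all three equivalences collapse to tautologies, and the obstacle disappears.
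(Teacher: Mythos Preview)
Your approach and the paper's are essentially the same: both exploit the identity $d=\sup_\alpha d_\alpha$ together with the trivial inequality $d_\alpha\leq d$ to pass between fuzzy-valued and endpoint statements via set inclusions. The paper works directly with the endpoint inequalities $t_k|(\hat{f_k}(x))_\alpha^\pm-(\hat f(x))_\alpha^\pm|\leq t_kd(\hat{f_k}(x),\hat f(x))$ and the complement argument $A_n^c\cap B_n^c\subseteq C_n^c$, giving $|C_n|\leq|A_n|+|B_n|$; it proves only (ii) and asserts that (i) and (iii) ``follow almost the same lines.'' You instead package the same mechanism as repeated applications of the Theorem~3.4 equivalence (uniform $\Leftrightarrow$ sup converges pointwise) with $\alpha$, or $(\alpha,x)$, playing the role of the uniformity variable. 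For (i) and (ii) these are equivalent presentations of one argument.

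For (iii) you correctly flag the real obstacle: your step ``$\sup_\alpha S_m^\alpha(x)\leq\delta_m$ implies $S_m(x)\leq\delta_m$'' is not justified, because $\{k:\sup_\alpha g_{k,\alpha}(x)\geq\varepsilon\}$ need not be contained in any single $\{k:g_{k,\alpha_0}(x)\geq\varepsilon\}$ (different $k$ may attain the supremum at different $\alpha$), and an uncountable union over $\alpha$ yields no cardinality control. Passing to $\varepsilon-\eta$ does not repair this. The paper does not address this point either; it simply declares (iii) analogous to (ii). Your fallback to the ``Note'' in Section~2 is not a valid resolution: that remark restricts the fuzzy numbers used in the \emph{examples}, not the functions $\hat{f_k}$ in the theorems, so invoking it would collapse the statement to a tautology rather than prove it. In short, your converse in (iii) has a genuine gap, but it is one the paper's own proof shares rather than fills.
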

\begin{proof}
We shall prove this only for (ii). As (i),(iii) follow almost the same lines.\\
(ii) Let $\hat{f_k}\xrightarrow{WU_{\alpha,\beta} ^{\theta}} \hat{f}$. Now for all $\alpha\in[0,1]$ and for all $x\in[a,b]$,
\begin{eqnarray*}
t_k|(\hat{f_k}(x))_{\alpha} ^+ - (\hat{f}(x))_{\alpha} ^+|\leq t_kd(\hat{f_k}(x),\hat{f}(x))\ \mbox{and} \ t_k|(\hat{f_k}(x))_{\alpha} ^- - (\hat{f}(x))_{\alpha} ^-|\leq t_kd(\hat{f_k}(x),\hat{f}(x))
\end{eqnarray*}
So for all $\varepsilon>0$,
\begin{eqnarray*}
&& \Big\{k\leq T_{\alpha\beta(n)}:t_k|(\hat{f_k}(x))_{\alpha} ^+ - (\hat{f}(x))_{\alpha} ^+|\geq \varepsilon\ \forall \ x\in[a,b]\Big\}\\ && \subseteq \Big\{k\leq T_{\alpha\beta(n)}:t_kd(\hat{f_k}(x),\hat{f}(x))\geq \varepsilon\ \forall \ x\in[a,b]\Big\}\\
&\mbox{and}& \Big\{k\leq T_{\alpha\beta(n)}:t_k|(\hat{f_k}(x))_{\alpha} ^- - (\hat{f}(x))_{\alpha} ^-|\geq \varepsilon\ \forall \ x\in[a,b]\Big\}\\ && \subseteq \Big\{k\leq T_{\alpha\beta(n)}:t_kd(\hat{f_k}(x),\hat{f}(x))\geq \varepsilon\ \forall \ x\in[a,b]\Big\}
\end{eqnarray*}
Therefore $[\hat{f_k}(x)]_{\alpha}$ is weighted $\alpha\beta$ - uniformly statistically convergent to $[\hat{f}(x)]_{\alpha}$ w.r.t. $\alpha$ and $x$.\\
Conversely, let $(\hat{f_k}(x))_{\alpha}$ is weighted $\alpha\beta$ - uniformly convergent to $\hat{f}$ w.r.t. $\alpha$ and $x$. Then for any $\varepsilon>0$ and for all $x\in[a,b]$, consider the sets\\
\begin{eqnarray*}
A_n &=& \Big\{k\leq T_{\alpha\beta(n)}:t_k|(\hat{f_k}(x))_{\alpha} ^+ - (\hat{f}(x))_{\alpha} ^+|\geq \varepsilon\Big\}\\
B_n &=& \Big\{k\leq T_{\alpha\beta(n)}:t_k|(\hat{f_k}(x))_{\alpha} ^- - (\hat{f}(x))_{\alpha} ^-|\geq \varepsilon\Big\}\\
C_n &=& \Big\{k\leq T_{\alpha\beta(n)}:t_kd(\hat{f_k}(x),\hat{f}(x))\geq \varepsilon\Big\}.
\end{eqnarray*}
For any $k\in A_n ^c\cap B_n^c$
\begin{eqnarray*}
t_k|(\hat{f_k}(x))_{\alpha} ^+ - (\hat{f}(x))_{\alpha} ^+|< \varepsilon\ \mbox{and} \ t_k|(\hat{f_k}(x))_{\alpha} ^- - (\hat{f}(x))_{\alpha} ^-|< \varepsilon
\end{eqnarray*}
for any $x\in[a,b]$ and $\alpha\in[0,1]$. Thus
\begin{eqnarray*}
&& t_kd(\hat{f_k}(x),\hat{f}(x))=t_k\sup\limits_{\alpha\in[0,1]}\max\{|(\hat{f_k}(x))_{\alpha} ^+ - (\hat{f}(x))_{\alpha} ^+|,|(\hat{f_k}(x))_{\alpha} ^- - (\hat{f}(x))_{\alpha} ^-|\}<\varepsilon\\
\mbox{i.e.} && \ k\in C_n ^c.
\end{eqnarray*}
Applying set theoretic approach, we have $|C_n|\leq |A_n|+|B_n|$.\\ So, dividing by $T_{\alpha\beta(n)} ^{\theta}$ in both sides, we get $\hat{f_k}\xrightarrow{WU_{\alpha,\beta} ^{\theta}} \hat{f}$. This completes the proof.
\end{proof}

\begin{thm}
Let $(\hat{f_k})$ be a sequence of fuzzy functions  from $[a,b]$ to $L(R)$ such that $\hat{f_k}\xrightarrow{WE_{\alpha,\beta} ^{\theta}} \hat{f}$ on $[a,b]$. Then for any bounded weight sequence $(t_n)$, the following hold:
\begin{enumerate}[(i)]
\item $\lim\limits_{x\to c} \hat{f_k} \xrightarrow{WE_{\alpha,\beta} ^{\theta}} \lim\limits_{x\to c} \hat{f}$.
\item If $(\hat{f_k})$ are continuous at some point $c\in[a,b]$, then $\hat{f}$ is continuous at $c$.
\item If $(\hat{f_k})$ be uniformly bounded on $[a,b]$, then $\hat{f}$ is also bounded on $[a,b]$.
\end{enumerate}
\end{thm}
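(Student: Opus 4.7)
The plan rests on a single extraction trick made available by equi-statistical convergence together with the Section 3 hypothesis $\liminf_k t_k = m_0 > 0$: given $\varepsilon > 0$ and any $\eta > 0$, fix $m$ so large that for every $x \in [a,b]$ the bad set
\begin{equation*}
B_x := \{k \leq T_{\alpha\beta(m)} : t_k d(\hat{f}_k(x), \hat{f}(x)) \geq \varepsilon\}
\end{equation*}
has cardinality less than $\eta T_{\alpha\beta(m)}^\theta$. Since $\theta \leq 1$ and $T_{\alpha\beta(m)} \to \infty$, the union of $B_x$ over any prescribed finite list of points still omits some $k^{\ast} \leq T_{\alpha\beta(m)}$, and at that $k^{\ast}$ one has $d(\hat{f}_{k^{\ast}}(x), \hat{f}(x)) < \varepsilon/m_0$ simultaneously at every point of the list. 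This is what lets $\hat{f}$ inherit regularity properties from the sequence $(\hat{f}_k)$.

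For (i), the implicit assumption is that each $\lim_{x\to c}\hat{f}_k(x)$ exists; denote this by $\hat{g}_k$. I first show $\hat{g} := \lim_{x\to c}\hat{f}(x)$ exists, via a Cauchy argument in the complete metric space $(L(R),d)$: given sequences $x_j, y_j \to c$, apply the extraction trick to the pair $\{x_j, y_j\}$ to produce $k^{\ast}_j$ and write
\begin{equation*}
d(\hat{f}(x_j), \hat{f}(y_j)) \leq \frac{2\varepsilon}{m_0} + d(\hat{f}_{k^{\ast}_j}(x_j), \hat{f}_{k^{\ast}_j}(y_j)).
\end{equation*}
Although $k^{\ast}_j$ depends on $j$, it lies in the finite set $\{1,\ldots,T_{\alpha\beta(m)}\}$, so $\delta := \min_{k\leq T_{\alpha\beta(m)}}\delta_k$ (with $\delta_k$ coming from existence of $\lim_{x\to c}\hat{f}_k(x)$) is positive and makes the second term small for all $j$ with $|x_j-c|,|y_j-c| < \delta$; completeness of $L(R)$ then yields $\hat{g}$. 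For the statistical convergence $\hat{g}_k \to \hat{g}$, note that $\hat{g}_k$ and $\hat{g}$ are independent of $x$, so equi-statistical and pointwise statistical convergence coincide. Letting $B_m := \{k \leq T_{\alpha\beta(m)}: t_k d(\hat{g}_k, \hat{g}) \geq \varepsilon\}$, continuity of $d$ gives, for each $k \in B_m$, some $j_k$ with $t_k d(\hat{f}_k(x_{j_k}), \hat{f}(x_{j_k})) \geq \varepsilon/2$; since $B_m$ is finite, $j^{\ast} := \max_{k \in B_m} j_k$ embeds $B_m$ into the bad set at $x_{j^{\ast}}$ at level $\varepsilon/2$, whose density is less than $\eta$.

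Part (ii) is immediate from (i): continuity of each $\hat{f}_k$ at $c$ gives $\hat{g}_k = \hat{f}_k(c)$, so (i) yields $\hat{f}_k(c) \xrightarrow{WE_{\alpha,\beta}^\theta} \hat{g}$, while evaluating the original hypothesis at $x = c$ gives $\hat{f}_k(c) \xrightarrow{WE_{\alpha,\beta}^\theta} \hat{f}(c)$. Uniqueness of the weighted $\alpha\beta$-statistical limit, which follows from the triangle inequality (if the two limits were distinct, using $t_k \geq m_0$ for large $k$, every sufficiently large $k$ would lie in the bad set for one limit or the other, forcing density toward $1$ and contradicting $\theta \leq 1$), then forces $\hat{f}(c) = \hat{g}$. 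Part (iii): with uniform bound $d(\hat{f}_k(x), \bar{0}) \leq B$ for all $k,x$, apply the extraction trick at the single point $x$ to obtain $k^{\ast}$ with $d(\hat{f}_{k^{\ast}}(x), \hat{f}(x)) < \varepsilon/m_0$; the triangle inequality then gives $d(\hat{f}(x), \bar{0}) \leq B + \varepsilon/m_0$ uniformly in $x \in [a,b]$.

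The main obstacle is the $x$-dependence of the extracted index $k^{\ast}$: in (i) and (ii) one needs $d(\hat{f}_{k^{\ast}}(x_j), \hat{f}_{k^{\ast}}(y_j))$ to be controlled uniformly in the choice of $k^{\ast}$, even though $k^{\ast}$ varies with the points. The resolution, used repeatedly, is that $k^{\ast}$ always lies in the \emph{finite} range $\{1,\ldots,T_{\alpha\beta(m)}\}$ once $m$ is fixed, so a single $\delta$ (or, dually, a single $j^{\ast}$) chosen as a minimum (resp.\ maximum) over that finite set works for every possible extraction.
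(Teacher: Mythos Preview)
Your argument is correct and is in fact more complete than the paper's, but the two follow genuinely different lines.

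The paper proves only (i), and does so by \emph{assuming} that $\hat u:=\lim_{x\to c}\hat f(x)$ exists. It then uses the \emph{upper} bound $t_n<M$: with $p$ fixed from equi-statistical convergence at level $\varepsilon/(3M)$, one picks a single $x$ close enough to $c$ so that $d(\hat f_k(x),\hat u_k)$ and $d(\hat f(x),\hat u)$ are both below $\varepsilon/(3M)$ for every $k\le T_{\alpha\beta(p)}$, and the triangle inequality forces $t_n d(\hat u_n,\hat u)<\varepsilon$ for every $n$ in the good set $A_p(x)$; the bad set for $(\hat u_k)$ is then contained in $A_p(x)^{c}$.

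Your route is dual in two respects. First, you do not take the existence of $\lim_{x\to c}\hat f(x)$ for granted but manufacture it via a Cauchy argument in the complete space $(L(R),d)$, using your extraction trick at pairs $\{x_j,y_j\}$. Second, where the paper multiplies up by the upper bound $M$, you divide down by the lower bound $m_0=\liminf t_k$ to pass from $t_{k^\ast}d(\cdot,\cdot)<\varepsilon$ to $d(\cdot,\cdot)<\varepsilon/m_0$. Your treatment of the $k$-dependence of the $\delta_k$'s (taking a minimum over the finite index range once $m$ is fixed) makes explicit a point the paper glosses over. You also supply clean reductions for (ii) (via uniqueness of the weighted $\alpha\beta$-statistical limit) and (iii), which the paper leaves to the reader. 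One small technicality: $\liminf_k t_k=m_0$ does not give $t_k\ge m_0$ for all $k$, only for all sufficiently large $k$; since your extracted index $k^{\ast}$ ranges over a set of size roughly $T_{\alpha\beta(m)}$ while the exceptional indices are finitely many, you can simply exclude them in the extraction step and replace $m_0$ by $m_0/2$ throughout.
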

\begin{proof}
Let $t_n<M$ for all $n$. Since $\hat{f_k}\xrightarrow{WE_{\alpha,\beta} ^{\theta}} \hat{f}$ on $[a,b]$, then for any given $\varepsilon>0$ there exists $p\in \mathbb{N}$ such that
\begin{equation*}
\frac{1}{T_{\alpha\beta (p)} ^{\theta}} \Big|\Big\{k\leq T_{\alpha\beta (p)}: t_kd(\hat{f_k}(x),\hat{f}(x))\geq \frac{\varepsilon}{3M}\Big\}\Big|<\frac{1}{2}
\end{equation*}
for all $x\in [a,b]$. Consider the set $A_p(x)=\Big\{k\leq T_{\alpha\beta (p)}: t_kd(\hat{f_k}(x),\hat{f}(x))< \frac{\varepsilon}{3M}\Big\}$, $x\in[a,b]$. Then $\frac{1}{T_{\alpha\beta (p)} ^{\theta}}|A_p(x)|>\frac{1}{2}$ for all $x\in[a,b]$. \\
Let $\lim\limits_{x\to c} \hat{f_k}(x)=\hat{u_k}$. Then for some $\delta_1>0$,
\begin{equation*}
c-\delta_1<x<c+\delta_1 \implies d(\hat{f_k}(x),\hat{u_k})< \frac{\varepsilon}{3M}
\end{equation*}
Denote $\lim\limits_{x\to c} \hat{f}(x)=\hat{u}$. Then for some $\delta_2>0$,
\begin{equation*}
c-\delta_2<x<c+\delta_2 \implies d(\hat{f}(x),\hat{u})< \frac{\varepsilon}{3M}
\end{equation*}
Set $\delta=\max\{\delta_1,\delta_2\}$ and take $n\in A_p (x)$. Then $\delta>0$ and for all  $x$ satisfying $c-\delta<x<c+\delta$,
\begin{eqnarray*}
d(\hat{u_n},\hat{u}) &\leq& d(\hat{u_n},\hat{f_n}(x))+d(\hat{f_n}(x),\hat{f}(x))+d(\hat{f}(x),\hat{u})\\
\mbox{Therefore} \ t_nd(\hat{u_n},\hat{u}) &<& M[\frac{\varepsilon}{3M}+\frac{\varepsilon}{3M}+\frac{\varepsilon}{3M}]=\varepsilon.
\end{eqnarray*}
So, $n\in A_p(x)\implies t_nd(\hat{u_n},\hat{u})<\varepsilon$. Or in other words,
\begin{equation*}
\Big\{k\leq T_{\alpha\beta (p)}: t_kd(\hat{u_k},\hat{u})\geq \varepsilon\Big\}\subset \Big\{k\leq T_{\alpha\beta (p)}: t_kd(\hat{f_k}(x),\hat{f}(x))< \frac{\varepsilon}{3}\Big\}
\end{equation*}
Since $\varepsilon$ is arbitrary, so $\hat{u_k} \xrightarrow{WE_{\alpha,\beta} ^{\theta}} \hat{u}$. This completes the proof.\\
\end{proof}

\begin{remark}
Since the above result is true in case of equi-statistical convergence and equi-statistical convergence implies uniform convergence, so it also holds for weighted $\alpha\beta$ - uniform convergence of order $\theta$. But this does not hold for pointwise convergence. It is enough to give an example for any one of the three.
\end{remark}
\begin{exmp}
Let $(\hat{f_n})$ be a sequence of fuzzy functions defined as
\begin{equation*}
\hat{f_n}(x)=\overline{(x^n)},x\in[0,1]
\end{equation*}
Then for each $\alpha\in[0,1]$, $[\hat{f_n}(x)]_{\alpha}=x^n$. It is convergent and hence statistically convergent i.e. $\hat{f_n} \xrightarrow{st} \hat{f}(x)$ as $n\to\infty$ where $f(x) = \left\{
\begin{array}{c l}
  \bar{0}, & x\in[0,1) \\
  \bar{1}, & x=1.
\end{array}
\right.$.\\
Clearly, $(\hat{f_n}(x))$ are continuous function in $[0,1]$ but the limit function $\hat{f}(x)$ is not.\\
Rest of the cases can also be verified similarly.
\end{exmp}

\begin{thm}
Let $(\hat{f_k})$ be a sequence of equi-continuous fuzzy functions on $[a,b]$ such that $\hat{f_k}\xrightarrow{WP_{\alpha,\beta} ^{\theta}} \hat{f}$. Then for a bounded weight $(t_n)$, the limit function $\hat{f}$ is continuous on $[a,b]$ and $\hat{f_k}\xrightarrow{WU_{\alpha,\beta} ^{\theta}} \hat{f}$ on $[a,b]$.
\end{thm}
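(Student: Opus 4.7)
The plan is to adapt the classical Dini-type argument to the statistical setting: first I would prove continuity of the limit $\hat{f}$ using equi-continuity and a suitable choice of $k$ from a statistically large set, then use compactness of $[a,b]$ together with a finite $\delta$-net to promote pointwise statistical convergence to uniform statistical convergence.

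For continuity of $\hat{f}$ at a point $c\in[a,b]$, given $\varepsilon>0$, equi-continuity of $(\hat{f_k})$ supplies $\delta>0$ with $d(\hat{f_k}(x),\hat{f_k}(c))<\varepsilon/3$ whenever $|x-c|<\delta$, uniformly in $k$. Fix such an $x$. Using $\liminf_k t_k = t_0 > 0$, take $N_0$ so that $t_k \geq t_0/2$ for $k \geq N_0$. The two sets
\begin{equation*}
A_n = \Big\{k \leq T_{\alpha\beta(n)} : t_k d(\hat{f_k}(x),\hat{f}(x)) \geq t_0 \varepsilon/6\Big\}, \quad B_n = \Big\{k \leq T_{\alpha\beta(n)} : t_k d(\hat{f_k}(c),\hat{f}(c)) \geq t_0 \varepsilon/6\Big\}
\end{equation*}
both have density along $T^{\theta}_{\alpha\beta(n)}$ tending to zero, hence for large $n$ their union together with $\{k < N_0\}$ is a proper subset of $\{k \leq T_{\alpha\beta(n)}\}$. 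Picking any $k$ outside this union, one obtains $d(\hat{f_k}(x),\hat{f}(x))<\varepsilon/3$ and $d(\hat{f_k}(c),\hat{f}(c))<\varepsilon/3$, and the triangle inequality through $\hat{f_k}(x)$ and $\hat{f_k}(c)$ yields $d(\hat{f}(x),\hat{f}(c))<\varepsilon$.

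For the uniform convergence, the continuity of $\hat{f}$ just proved combined with compactness of $[a,b]$ gives uniform continuity of $\hat{f}$. Given $\varepsilon>0$, choose $\delta$ so that $|x-y|<\delta$ implies both $d(\hat{f_k}(x),\hat{f_k}(y))<\varepsilon/(3M)$ (equi-continuity) and $d(\hat{f}(x),\hat{f}(y))<\varepsilon/(3M)$. Cover $[a,b]$ by finitely many intervals of length $<\delta$ with centers $x_1,\ldots,x_N$, and set $A_n^j = \{k \leq T_{\alpha\beta(n)} : t_k d(\hat{f_k}(x_j),\hat{f}(x_j)) \geq \varepsilon/3\}$. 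Each $A_n^j$ has density tending to zero, so does the finite union $\bigcup_{j=1}^N A_n^j$. For $k$ outside this union and any $x\in[a,b]$, pick the nearest $x_j$ and bound
\begin{equation*}
t_k d(\hat{f_k}(x),\hat{f}(x)) \leq t_k d(\hat{f_k}(x),\hat{f_k}(x_j)) + t_k d(\hat{f_k}(x_j),\hat{f}(x_j)) + t_k d(\hat{f}(x_j),\hat{f}(x)) < \varepsilon,
\end{equation*}
using $t_k<M$ on the outer terms and $k\notin A_n^j$ on the middle term. This shows $\big\{k \leq T_{\alpha\beta(n)} : \sup_{x\in[a,b]} t_k d(\hat{f_k}(x),\hat{f}(x)) \geq \varepsilon\big\} \subseteq \bigcup_j A_n^j$, and by Theorem 2.4 this gives $\hat{f_k}\xrightarrow{WU^{\theta}_{\alpha,\beta}}\hat{f}$.

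The main obstacle is handling the weight factor $t_k$, which sits inside the statistical-convergence condition rather than appearing as a multiplier outside a bare distance estimate. I expect to need both sides of the boundedness information on $(t_k)$: the lower bound $\liminf_k t_k>0$ in the continuity step, to convert a weighted smallness statement into an honest unweighted distance estimate at the specific well-chosen index $k$, and the upper bound $t_k<M$ in the uniformization step, to absorb the equi-continuity and uniform-continuity estimates uniformly in $k$ on both sides of the triangle inequality.
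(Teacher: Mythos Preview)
Your proposal is correct and follows essentially the same two-step architecture as the paper's proof: first establish continuity of $\hat f$ at an arbitrary point via equi-continuity plus a triangle-inequality sandwich through a well-chosen index $k$, then upgrade to uniform statistical convergence by extracting a finite $\delta$-net from the compact interval and bounding the exceptional set by a finite union of pointwise exceptional sets. Your treatment is in fact slightly more careful than the paper's in two places --- you explicitly select a single $k$ that works simultaneously at $x$ and at $c$ (the paper tacitly uses the same index $p$ for both without justification), and you work directly with $\liminf_k t_k>0$ rather than silently strengthening it to a uniform lower bound $L<t_n$ as the paper does --- but the overall strategy is the same.
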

\begin{proof}
Let $L<t_n<M$ for all $n$. Clearly, $L>0$ according to our construction of $(t_n)$ i.e. $\lim\inf_{n\to\infty} t_n>0$. To prove $\hat{f}$ is continuous on $[a,b]$, let $c$ be any point in $[a,b]$. Since $(\hat{f_k})$ are equi- continuous on $[a,b]$, so $\exists$ $\delta>0$ such that
\begin{equation*}
d(\hat{f_n}(x),\hat{f_n}(c))<\frac{\varepsilon}{3}\ \mbox{whenever}\ c-\delta<x<c+\delta \ \forall \ n
\end{equation*}
Also since $\hat{f_k}\xrightarrow{WP_{\alpha,\beta} ^{\theta}} \hat{f}$, so there exists $p\in \mathbb{N}$ such that $t_pd(\hat{f_p}(x),\hat{f}(x))<\frac{L\varepsilon}{3}$ so that $d(\hat{f_p}(x),\hat{f}(x))<\frac{\varepsilon}{3}$.
Then,
\begin{equation*}
d(\hat{f}(x),\hat{f}(c))\leq d(\hat{f}(x),\hat{f_p}(x))+d(\hat{f_p}(x),\hat{f_p}(c))+d(\hat{f_p}(c),\hat{f}(c))<\frac{\varepsilon}{3}+\frac{\varepsilon}{3}+\frac{\varepsilon}{3}=\varepsilon
\end{equation*}
Since $c$ is arbitrary, so $\hat{f}$ is continuous on $[a,b]$.\\
Next we shall prove that the convergence will be uniform. Now, continuity in $[a,b]$ implies uniform continuity and same for equi-continuity also i.e. for any two points $x,y\in[a,b]$ and $\varepsilon>0$ there exists $\delta>0$ such that $|x-y|<\delta$ implies
\begin{center}
$d(\hat{f_k}(x),\hat{f_k}(y))< \frac{\varepsilon}{3M}$ and $d(\hat{f}(x),\hat{f}(y))< \frac{\varepsilon}{3M}$.
\end{center}
Since $[a,b]$ is compact, we can choose a finite cover $(x_1-\delta,x_1+\delta),(x_2-\delta,x_2+\delta),\ldots,(x_m-\delta,x_m+\delta)$ from the covers of $[a,b]$. As $\hat{f_k}\xrightarrow{WP_{\alpha,\beta} ^{\theta}} \hat{f}$, consider the set $A_k=\Big\{p\leq T_{\alpha\beta (k)}: t_kd(\hat{f_k}(x_i),\hat{f}(x_i))< \frac{\varepsilon}{3M}\Big\}$ for $i=1,2,\ldots, m$. Then for any $n\in A_k$ and $x\in (x_i-\delta,x_i+\delta)$ for $i=1,2,\ldots, m$, we have
\begin{eqnarray*}
t_nd(\hat{f_n}(x),\hat{f}(x)) &\leq& t_nd(\hat{f_n}(x),\hat{f_n}(x_i))+t_nd(\hat{f_n}(x_i),\hat{f}(x_i))+t_nd(\hat{f}(x_i),\hat{f}(x))\\
&<& M\Big[\frac{\varepsilon}{3M}+\frac{\varepsilon}{3M}+\frac{\varepsilon}{3M}\Big]=\varepsilon
\end{eqnarray*}
Thus we get
\begin{eqnarray*}
n\in A_k \implies t_nd(\hat{f_n}(x),\hat{f}(x))<\varepsilon\ \mbox{where $n$ and the intervals $(x_i-\delta,x_i+\delta)$ are choosen arbitrarily.}
\end{eqnarray*}
\begin{eqnarray*}
\mbox{Then} &&\ \Big\{k\leq T_{\alpha\beta (n)}: t_kd(\hat{f_k}(x),\hat{f}(x))\geq \varepsilon \ \forall \ x\in[a,b]\Big\}\\ && \subseteq \Big\{k\leq T_{\alpha\beta (n)}: t_kd(\hat{f_k}(x),\hat{f}(x))\geq \frac{\varepsilon}{3M}\ \mbox{for every}\ \ x\in[a,b]\Big\}
\end{eqnarray*}
Taking cardinality in both sides and diving by $T_{\alpha\beta (n)}$, we get $\hat{f_k}\xrightarrow{WU_{\alpha,\beta} ^{\theta}} \hat{f}$ on $[a,b]$. This completes the proof.
\end{proof}

\end{document}